\newcommand{\no}[1]{#1}
\renewcommand{\no}[1]{}
\renewcommand{\Delta}{\upDelta}}
\tikzset{
>=stealth',
  punktchain/.style={
    rectangle, 
    rounded corners, 
    draw=black, very thick,
    text width=10em, 
    minimum height=3em, 
    text centered, 
    on chain},
  line/.style={draw, thick, <-},
  element/.style={
    tape,
    top color=white,
    bottom color=blue!50!black!60!,
    minimum width=8em,
    draw=blue!40!black!90, very thick,
    text width=10em, 
    minimum height=3.5em, 
    text centered, 
    on chain},
  every join/.style={->, thick,shorten >=1pt},
  decoration={brace},
  tuborg/.style={decorate},
  tubnode/.style={midway, right=2pt},
}
\numberwithin{algorithm}{section}
\numberwithin{figure}{section}
\newtheorem{lemma}{Lemma}[section]
\newtheorem{proposition}{Proposition}[section]
\newtheorem{theorem}{Theorem}[section]
\def\la{\langle}
\def\ra{\rangle}
\newcommand{\be}{\begin{equation}}
\newcommand{\ee}{\end{equation}}
\newcommand{\ba}{\begin{array}}
\newcommand{\ea}{\end{array}}
\newcommand{\bea}{\begin{eqnarray*}}
\newcommand{\eea}{\end{eqnarray*}}
\newcommand{\bean}{\begin{eqnarray}}
\newcommand{\eean}{\end{eqnarray}}
\def\hat{\widehat}
\def\tilde{\widetilde}
\def\cydot{\leavevmode\raise.4ex\hbox{.}}
\date{\today}
\numberwithin{algorithm}{section}
\numberwithin{figure}{section}
\title[]{Recovering simultaneously a potential and a point source from Cauchy data}
\author{Gang Bao}
\address{Department of
Mathematics, Zhejiang University,  Hangzhou,
Zhejiang,  310027,    China}
\email{baog@zju.edu.cn}
\author{Yuantong  Liu}
\address{Department of
Mathematics, Zhejiang University,  Hangzhou,
Zhejiang,  310027,    China}
\email{ytliu@zju.edu.cn}
\author{Faouzi Triki}
\address{Faouzi Triki,  Laboratoire Jean Kuntzmann,  UMR CNRS 5224, 
Universit\'e  Grenoble-Alpes, 700 Avenue Centrale,
38401 Saint-Martin-d'H\`eres, France}
\email{faouzi.triki@univ-grenoble-alpes.fr}
\thanks{
The work of GB is supported in part by a NSFC Innovative Group Fund (No.11621101).
The work of FT is supported in part by the
 grant ANR-17-CE40-0029 of the French National Research Agency ANR (project MultiOnde).}
\subjclass{Primary: 35R30, 35C20.}
\keywords{inverse potential,  Dirichlet to Neumann map, stability estimate, point sources, Shr\"odinger equation}
\begin{document}

\lstset{language=Matlab,%
    breaklines=true,%
    morekeywords={matlab2tikz},
    keywordstyle=\color{blue},%
    morekeywords=[2]{1}, keywordstyle=[2]{\color{black}},
    identifierstyle=\color{black},%
    stringstyle=\color{mylilas},
    commentstyle=\color{mygreen},%
    showstringspaces=false,
    numbers=left,%
    numberstyle={\tiny \color{black}},
    numbersep=9pt, 
    emph=[1]{for,end,break},emphstyle=[1]\color{red}, 
}

\begin{abstract}
This paper is devoted to the inverse problem of recovering  simultaneously  a potential and a  point source 
  in a Shr\"odinger equation   from the  associated  nonlinear
 Dirichlet to Neumann map. The  uniqueness of the inversion is  proved and  logarithmic  stability estimates
 are derived. It is well known that  the  inverse problem of determining  only the  potential  while
knowing the source, is  ill-posed.  In contrast the problem of identifying a point source when the 
potential is given is well posed.  The  obtained results show that the  nonlinear Dirichlet to Neumann map  
  contains enough   information  to determine  simultaneously  the potential and  the point source. However recovering 
 a point source imbedded  in an unknown  background medium   becomes  an ill-posed inversion. 
 \end{abstract}

\maketitle

\section{Introduction and main results}
In this paper we study the issue of uniqueness and  stability for determining  simultaneously a smooth
potential  and a point source  in a Shr\"odinger equation by boundary measurements.  Motivation for investigating 
 this  inverse problem is provided by medical imaging as well as antenna synthesis \cite{HR, AKK, El, FKM}. From the point
view of mathematical modeling, many works  have considered  the simplification that the background medium in which 
 the source is imbedded is known. Here we   are interested  in recovering  both the source and the background medium 
 from Cauchy data. \\

 Let $\Omega
\subset \mathbb{R}^{d}, \, d\geq 3, $  be a bounded domain, with  $C^\infty$
boundary $\partial \Omega$. We consider the Shr\"odinger  equation
\begin{equation}\label{eq:eq}
\bigl( \Delta + q(x) \bigr) u(x) = a\delta_{z}(x)\quad \mbox{ in }\quad
\Omega,
\end{equation}
where the real-valued function $q(x)$ is the potential, $z \in \Omega$ is the position of the 
point source, and $a\in \mathbb R^d\setminus\{0\}$
its amplitude.
Assume that the
kernel of the operator $\Delta+ q(x)$ acting on $H_0^1(\Omega)$ is
the trivial space. Associated with \eqref{eq:eq}, we define the
nonlinear Dirichlet-to-Neumann map (DtN) $\Phi[q, a, z]:
H^{1/2}(\partial\Omega)\to H^{-1/2}(\partial\Omega)$ by
\[
\Phi (f) =
\frac{\partial u}{\partial \nu} \biggr|_{\partial \Omega},
\]
where $u$ is the solution to (\ref{eq:eq}) with the Dirichlet
condition $u = f$ on $\partial \Omega$, and $\nu$ is the unit outer
normal vector of $\partial \Omega$.\\

The nonlinear  map $f\rightarrow \Phi[q, a, z](f)$ is 
an affine function. It can be decomposed  as $\Phi[q, a, z](f) = \Phi[q,a,z](0) + \Phi[q, 0, 0](f),$ 
where le latter map is the classical linear Dirichlet-to-Neumann map associated to   the 
Shr\"odinger  equation. \\

We further denote by \[ \Vert \Phi \rVert_\star = \sup_{
\| f\|_{ H^{1/2}(\partial\Omega)} \leq 1} 
\lVert \Phi(f) \rVert_{H^{-1/2}(\partial\Omega)},\]
the norm that we will use to evaluate the strength  of the nonlinear (DtN)  map. Due to the affine 
property of the map, the  trivial  map $\Phi=0$  is the unique solution to   the equation $ \Vert \Phi \rVert_\star = 0$.\\

The inverse problem we consider in this paper is {\it  to recover the triplet $(q, a, z)$
from the knowledge of the nonlinear (DtN)  map $\Phi[q, a, z]$}.
It is well known that  the  inverse problem of determining  only the  potential while 
knowing the source, is  ill-posed.  The uniqueness of this inverse
problem is derived in \cite{SU}. Alessandrini proved that the stability
estimate for this problem is of log type \cite{A88}, and Mandache showed
that the log type stability is optimal  for smooth potentials \cite{Ma}. For a given  potential
it is also well known that the identification of a general source function  
from  full Cauchy data is not possible. Indeed the authors in \cite{AMR} showed 
 that many boundary measurements are not sufficient to fully identify a general source. 
 Moreover it turns out that  increasing the number of boundary measurements does not increase 
 the information concerning the source. The identification may be  achieved by considering many
  boundary measurements generated by different frequencies
 \cite{BLT, BLT2}. Unlike general source functions, point sources are singular and has a lower dimensionality. 
 These specificities  enable one to obtain uniqueness in the inverse  source problem with a single
 Cauchy observed data \cite{ABF}. Many H\"older type  stability estimates have been derived  for this
 inverse problem when 
 the background medium
 is known and homogeneous \cite{EE, EN}.  The inverse problem considered in this 
paper is quite new, and only few partial results are available. Recently the authors in 
\cite{RZ}  established a  H\"older stability estimate on the reconstruction of point sources 
with respect to smooth changes of a known  potential. Their results say that  if the potential
 is known up to a small smooth perturbation,  the recovered source is close to the true one with respect to a 
 given  nonconventional distance (not comparable to classical distances in  Sobolev spaces for example). 
  Now we state the main result of the paper.
\begin{theorem}\label{thm}
Assume that $(q_1, a_1, z_1)$ and $(q_2, a_2, z_2)$ are two triplets  with 
associated (DtN) nonlinear maps $\Phi[q_1, a_1, z_1] $ and $\Phi_2[q_2, a_2, z_2] $, respectively. Let $s
> (d/2) + 1$ and $M \geq 1$. Suppose $\lVert q_{j} \rVert_{H^{s}
( \Omega )} \leq M$ $(j = 1, 2)$ and $\mathrm{supp} ( q_{1} - q_{2} )
\subset \Omega$. \\

Then if   $\lVert
  \Phi[q_1, a_1, z_1] - \Phi_2[q_2, a_2, z_2] \rVert_\star < 1 $, the following 
stability estimate 
\begin{equation}\label{iest}
\lVert a_1\delta_{z_1} -  a_1\delta_{z_2}\rVert_{H^{-s} ( \mathbb{R}^{d} )}+
\lVert q_1-q_2 \rVert_{H^{-s} ( \mathbb{R}^{d} )}
\leq 
C \left(-\log \lVert \Phi[q_1, a_1, z_1] - \Phi_2[q_2, a_2, z_2] \rVert_\star
\right)^{- (s -d/2) }
\end{equation}
holds, where $C>0$ depends only on $s, d,  \Omega, $ and $ M$.
\end{theorem}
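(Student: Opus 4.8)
The plan is to exploit the affine structure of the nonlinear map to decouple the problem into a potential part and a source part, and to attack each with complex geometrical optics (CGO) solutions. Writing $\Phi[q_j,a_j,z_j](f)=\Phi[q_j,a_j,z_j](0)+\Lambda_{q_j}f$ with $\Lambda_{q_j}:=\Phi[q_j,0,0]$ the classical linear (DtN) map, the hypothesis $\epsilon:=\lVert\Phi[q_1,a_1,z_1]-\Phi[q_2,a_2,z_2]\rVert_\star<1$ controls both pieces at once: evaluating at $f=0$ gives $\lVert\Phi[q_1,a_1,z_1](0)-\Phi[q_2,a_2,z_2](0)\rVert_{H^{-1/2}}\le\epsilon$, i.e. $\lVert\partial_\nu w_1-\partial_\nu w_2\rVert_{H^{-1/2}}\le\epsilon$ where $w_j$ solves $(\Delta+q_j)w_j=a_j\delta_{z_j}$ with $w_j|_{\partial\Omega}=0$, while the triangle inequality gives $\lVert\Lambda_{q_1}-\Lambda_{q_2}\rVert_{H^{1/2}\to H^{-1/2}}\le 2\epsilon$.

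First I would estimate the potential from the linear part. Since $s>d/2+1$ and $\lVert q_j\rVert_{H^s}\le M$, the Sylvester--Uhlmann/Faddeev construction yields CGO solutions $u^{(j)}_\zeta=e^{x\cdot\zeta}(1+\psi^{(j)}_\zeta)$ of $(\Delta+q_j)u=0$ for $\zeta\in\mathbb{C}^d$, $\zeta\cdot\zeta=0$, $|\zeta|$ large, with remainders decaying in $|\zeta|$ and bounded in terms of $M$. Inserting these into the Alessandrini identity $\int_\Omega(q_1-q_2)u^{(1)}_\zeta u^{(2)}_{\zeta'}=\langle(\Lambda_{q_1}-\Lambda_{q_2})u^{(1)}_\zeta|_{\partial\Omega},\,u^{(2)}_{\zeta'}|_{\partial\Omega}\rangle$ and choosing $\zeta,\zeta'$ with $\zeta+\zeta'=-i\xi$ produces a pointwise bound $|\widehat{(q_1-q_2)}(\xi)|\le C(e^{C\tau}\epsilon+\tau^{-\kappa}M)$ valid for $|\xi|\lesssim\tau$, where $\tau\sim|\mathrm{Im}\,\zeta|$. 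A low/high frequency split at radius $R\sim\tau$ (low frequencies handled by the CGO bound, high frequencies by the a priori $H^s$ bound), followed by optimization in $\tau$ of order $\log(1/\epsilon)$, then yields $\lVert q_1-q_2\rVert_{H^{-s}(\mathbb{R}^d)}\le C(-\log\epsilon)^{-(s-d/2)}$, the exponent reflecting the regularity gap together with the dimension entering through the volume of the frequency ball.

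Next, the source. Put $F:=a_1\delta_{z_1}-a_2\delta_{z_2}$ and $w:=w_1-w_2$, so that $(\Delta+q_1)w=F-(q_1-q_2)w_2$ in $\Omega$, $w|_{\partial\Omega}=0$, with $\lVert\partial_\nu w\rVert_{H^{-1/2}}\le\epsilon$. Choosing $\varphi=u^{(1)}_\zeta$, a CGO solution of $(\Delta+q_1)\varphi=0$, Green's identity gives $\langle F,\varphi\rangle=\langle\partial_\nu w,\varphi|_{\partial\Omega}\rangle+\int_\Omega(q_1-q_2)w_2\varphi$. Since $\langle F,\varphi\rangle=a_1\varphi(z_1)-a_2\varphi(z_2)\approx a_1e^{z_1\cdot\zeta}-a_2e^{z_2\cdot\zeta}$ encodes $\widehat F(\xi)$ at the frequency selected by $\zeta$, it suffices to bound the right-hand side: the boundary term is $\le\epsilon\,\lVert\varphi\rVert_{H^{1/2}(\partial\Omega)}\lesssim\epsilon\,e^{C\tau}$, while the coupling term is bounded by duality by $\lVert q_1-q_2\rVert_{H^{-s}}$ times the $H^s$-norm of $w_2\varphi$, which is finite (using the a priori $H^s$-regularity and interior ellipticity to place the Green function--type solution $w_2$ in suitable spaces away from $z_2$) and grows like $e^{C\tau}$. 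Feeding in the potential estimate gives $|\widehat F(\xi)|\lesssim(\epsilon+(-\log\epsilon)^{-(s-d/2)})e^{C\tau}$ for $|\xi|\lesssim\tau$, and the same frequency-splitting and optimization — now using $\delta_z\in H^{-s}(\mathbb{R}^d)$ for $s>d/2$ as the a priori regularity of $F$ — gives $\lVert F\rVert_{H^{-s}(\mathbb{R}^d)}\le C(-\log\epsilon)^{-(s-d/2)}$. Summing the two bounds yields \eqref{iest}.

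The hard part will be the source step, and specifically the coupling term $\int_\Omega(q_1-q_2)w_2\varphi$. Because $w_2$ is a Green function--type solution, singular at $z_2$, one must place it in the correct space to pair against $q_1-q_2$ without losing the rate; moreover the CGO factor $e^{C\tau}$ multiplies an already logarithmically small potential bound, so one has to check that the optimization still closes at the exponent $(s-d/2)$ rather than degrading it. Controlling this coupling so that the source rate is no worse than the potential rate — that is, achieving genuinely \emph{simultaneous} recovery — is the crux of the argument.
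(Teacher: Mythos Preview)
Your treatment of the potential via the affine decomposition and Alessandrini's CGO argument is correct and matches the paper. The gap is in the source step, and it is precisely the difficulty you flag in your last paragraph: the optimization does \emph{not} close. Even granting your claimed pointwise bound $|\widehat F(\xi)|\lesssim\bigl(\epsilon+(-\log\epsilon)^{-(s-d/2)}\bigr)e^{C\tau}$ for $|\xi|\lesssim\tau$, the low/high-frequency split yields
\[
\|F\|_{H^{-s}}^2\ \lesssim\ \bigl(\epsilon+L^{-(s-d/2)}\bigr)^2e^{2C\tau}+\tau^{\,d-2s},\qquad L:=-\log\epsilon,
\]
and balancing $\epsilon^2e^{2C\tau}$ against $\tau^{\,d-2s}$ forces $\tau\sim L$, whereupon the cross term $L^{-2(s-d/2)}e^{2C\tau}\sim L^{-2(s-d/2)}\epsilon^{-2C}$ explodes. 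The coupling term is simply too large once multiplied by the CGO boundary growth. There are two further issues you are glossing over: a single CGO $\varphi=e^{x\cdot\zeta}(1+\psi_\zeta)$ with $\zeta\cdot\zeta=0$ does not produce $\widehat F(\xi)$, because writing $\zeta=\eta-i\xi$ with $\eta$ real gives $\langle F,e^{x\cdot\zeta}\rangle=a_1e^{z_1\cdot\eta}e^{-iz_1\cdot\xi}-a_2e^{z_2\cdot\eta}e^{-iz_2\cdot\xi}$, with unequal real weights $e^{z_j\cdot\eta}$ of size $e^{C\tau}$; and $w_2$ has a Green-function singularity at $z_2$, so $w_2\varphi\notin H^s(\Omega)$ for $s>d/2+1$ and the duality bound $\|q_1-q_2\|_{H^{-s}}\|w_2\varphi\|_{H^s}$ is not available as written (working ``away from $z_2$'' does not help, since $q_1-q_2$ need not vanish there).

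The paper sidesteps all of this by a genuinely different device for the source. Rather than probe $F$ with CGO solutions at growing $\tau$, it fixes one $\xi$ (large in terms of $M$ only, independent of $\epsilon$) and constructs, via a second CGO-type equation $\Delta w+\nabla\log(v^2)\cdot\nabla w=0$, a pair $v$ and $vw$ of solutions to $(\Delta+q)u=0$ with $v\neq0$ and $\partial_{x_1}w\neq0$ in $\Omega$. From these one forms ``Lagrange interpolants''
\[
\theta_1(x)=\frac{v(x)}{v(z_1)}\,\frac{w(x)-w(z_2)}{w(z_1)-w(z_2)},\qquad
\theta_2(x)=\frac{v(x)}{v(z_2)}\,\frac{w(x)-w(z_1)}{w(z_2)-w(z_1)},
\]
which solve the homogeneous equation and satisfy $\theta_i(z_j)=\delta_{ij}$, so that $\langle F,\varphi\rangle=\langle F,\varphi(z_1)\theta_1+\varphi(z_2)\theta_2\rangle$ for every $\varphi\in H^s(\Omega)$. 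The key estimate is $\|\varphi(z_1)\theta_1+\varphi(z_2)\theta_2\|_{H^{1/2}(\partial\Omega)}\le C\|\varphi\|_{H^s}$ with $C$ \emph{independent of $\epsilon$} (this uses a lower bound $|w(z_1)-w(z_2)|\gtrsim|z_1-z_2|$ to control the denominators). Green's identity then gives $|\langle F,\varphi\rangle|\le C\|\varphi\|_{H^s}\bigl(\epsilon+\|q_1-q_2\|_{H^{-s}}\bigr)$ with no exponential factor, and the logarithmic rate for the source follows directly from the one already obtained for the potential. In short: the paper's innovation is to build test functions that read off point values of $F$ with \emph{bounded} boundary norm, so the source estimate inherits the potential rate without any further optimization.
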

The stability of reconstructing the potential and the point source is of  logarithmic 
 type. This means that the inversion is ill-posed and small variations in the measured data 
 can lead to large errors in 
 the reconstructions.   \\
 
The proof of Theorem~\ref{thm} is based on  Alessandrini's arguments in 
\cite{A88} and the Complex geometrical optics  (CGO) solutions constructed in \cite{SU}. The main idea is to 
first recover the potential by exploiting the nonlinearity of the DtN map (Proposition \ref{prop}). Then
 the remaining  inverse problem  becomes a linear one, and  we  again use (CGO)
 solutions to construct  a new type of special  solutions of  the equation 
 \eqref{eq:eq}  in order to  determine the position and 
 amplitude of the point source (Lemma~\ref{mainlemma}).  Using the same approach
  the obtained results  can be extended to the  inverse problem of recovering a potential and
  a finite number of point sources.  In the rest of the paper, we  
  introduce the  (CGO) solutions in Section 2, and  prove the main result in Section 3. 

\section{Complex geometrical optics solutions}
In this section, we construct (CGO) solutions to the equation
(\ref{eq:eq}) by using the idea in \cite{SU}.  We recall the following fundamental results due to Sylvester and Uhlmann in \cite{SU}
concerning solutions of the equation 

\begin{equation}\label{fund-equation}
\Delta w + \xi \cdot \nabla w = f 
\end{equation}
where $\xi \in \mathbb C^{d}$ and $\xi \cdot \xi = 0$. Given $\xi \in \mathbb C^{d}$ with $|\xi| \geq  2$ and $\xi\cdot \xi = 0$, define 
\begin{equation*}
\widehat{K_{\xi}}(k) = {1\over  -|k|^2 + i \xi \cdot k} \quad \mbox{ for } k \in \mathbb R^{d},
\end{equation*}
where $\widehat K_\xi$ stands for the Fourier transform of  the kernel $K_\xi$.
Then for $f \in H^{s}(\mathbb R^{d}),\, s\geq 0 $ with a compact support, $\mathcal K_{\xi}(f):= K_{\xi} * f$ is a solution to the equation 
\begin{equation*}\label{toto}
\Delta w + \xi \cdot \nabla w  = f \mbox{ in } \mathbb R^{d},  
\end{equation*}
and 
\begin{equation*}
\widehat{ K_{\xi}*f}  = \widehat K_{\xi} \cdot \hat f \in H^s(\mathbb R^d). 
\end{equation*}
\begin{lemma}[\cite{SU}] \label{lemma:SUprop21alpha} Let $-1 < \delta < 0$, 
$\xi \in \mathbb C^{d}$ with $|\xi|> 2$ and $\xi \cdot \xi = 0$,  and let $f \in L^{2}_{loc}(\mathbb R^{d})$.  Then 
\begin{equation}\label{SU1-o}
\| K_{\xi} * f \|_{H^s_{\delta}} \leq {C \over |\xi|} \| f \|_{H^s_{\delta+1}} \quad \mbox{ for } s \ge 0, 
\end{equation}
\begin{equation}\label{SU2-o}
\| K_{\xi} * f \|_{H^{s+1}_{\delta}} \leq {C} \| f \|_{H^{s}_{\delta+1}}  \quad \mbox{ for } s \ge 0. 
\end{equation}
for some positive constant  $C>0$ that only depends on $\delta, s , $ and $d$. 
\noindent Here 
\begin{equation*}
\| v\|_{L^{2}_{\delta}} := \| (1 + |\cdot |^{2})^{\delta} v(\cdot)\|_{L^{2}(\mathbb R^d)}
\end{equation*}
and 
\begin{equation*}
\| v\|_{H^{s}_{\delta}} := \sum_{|\alpha| = 0}^{s}\| 
(1 + |\cdot |^{2})^{\delta} \partial^{\alpha } v(\cdot)\|_{L^{2}(\mathbb R^d)}.
\end{equation*}
\end{lemma}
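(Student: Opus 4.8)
The plan is to regard $K_\xi$ as the Fourier multiplier with symbol $m_\xi(k)=(-|k|^2+i\xi\cdot k)^{-1}$ and to reduce the two weighted estimates to a single one–dimensional scalar bound by exploiting the geometry $\xi\cdot\xi=0$. Since the weighted norms $\|\cdot\|_{H^s_\delta}$ depend only on $|x|$, a rotation of the $x$–coordinates leaves them invariant; and because $\xi\cdot\xi=0$ forces $\xi=s(e+if)$ with $e\perp f$, $|e|=|f|=1$, and $s=|\xi|/\sqrt2$, I may assume $e=e_1$, $f=e_2$. Then $\xi\cdot k=s(k_1+ik_2)$ and the symbol becomes $m_\xi(k)=(-|k|^2-sk_2+isk_1)^{-1}$, whose denominator vanishes exactly on the $(d-2)$–sphere $\{k_1=0,\ |k'|^2+sk_2=0\}$, $k'=(k_2,\dots,k_d)$, a set of codimension two.

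To resolve this singularity I would freeze the transverse variable $k'$ and slice in the distinguished variable $k_1$ (the one carrying the imaginary part). For frozen $k'$ the symbol is the one–dimensional multiplier $k_1\mapsto\bigl(-(k_1-z_+)(k_1-z_-)\bigr)^{-1}$ with poles $z_\pm=\tfrac12\bigl(is\pm\sqrt{-s^2-4b}\,\bigr)$, where $b=|k'|^2+sk_2$. A partial–fraction decomposition gives
\[
m_\xi=\frac{-1}{z_+-z_-}\left(\frac{1}{k_1-z_+}-\frac{1}{k_1-z_-}\right),
\qquad |z_+-z_-|=\sqrt{\,|s^2+4b|\,}.
\]
The crucial point is that in the delicate regime $b\ge0$, where one pole approaches the real axis (this is precisely where $k'$ approaches the singular sphere), one has $|z_+-z_-|\gtrsim s\sim|\xi|$, so the partial–fraction coefficient is $O(|\xi|^{-1})$: this is the mechanism that produces the gain in (\ref{SU1-o}). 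The remaining regime $b<-s^2/4$ keeps both poles at distance $\ge s/2$ from $\mathbb R$ (a double pole at $k_1=is/2$ when $b=-s^2/4$, handled directly through the kernel $x_1e^{izx_1}$), and is harmless because of the strong exponential decay.

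The heart of the matter is then the \emph{uniform} one–dimensional weighted estimate: the convolution operator with symbol $(k_1-z)^{-1}$, whose kernel is the one–sided exponential $c\,\mathbf 1_{\pm x_1>0}\,e^{izx_1}$, maps $L^2_{\delta+1}(\mathbb R)\to L^2_\delta(\mathbb R)$ with a bound \emph{independent of} $z$ as long as $\operatorname{Im}z\neq0$. I would prove this by Schur's test applied to the kernel $(1+x_1^2)^{\delta}\,\mathbf 1_{\pm(x_1-y_1)>0}\,e^{iz(x_1-y_1)}\,(1+y_1^2)^{-\delta-1}$: the exponential factor is integrable uniformly in $z$, while the weight factors are controlled exactly because $-1<\delta<0$ — decay ($\delta<0$) lets the operator gain a power of $1+|x|^2$, and $\delta>-1$ keeps $(1+y_1^2)^{-\delta-1}$ integrable against the kernel. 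This is where the hypothesis on $\delta$ is essential, and this uniform bound (uniform as a pole approaches the real axis) is the main obstacle. With it in hand, Fubini and Plancherel in $k'$ assemble the one–dimensional slices, and the factor $|z_+-z_-|^{-1}\lesssim|\xi|^{-1}$ yields (\ref{SU1-o}).

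Finally, (\ref{SU2-o}) and the general $s\ge0$ statements follow by commuting derivatives through the multiplier. Since $m_\xi$ is translation invariant, $\partial^\alpha(K_\xi*f)=K_\xi*\partial^\alpha f$, so the $H^s_\delta$ bounds reduce to the $L^2_\delta$ bound applied to each $\partial^\alpha f$, the interaction of $\partial^\alpha$ with the weight $(1+|x|^2)^\delta$ being of lower order and absorbed by a routine commutator argument. For the single gained derivative in (\ref{SU2-o}) I would note that $\partial_j(K_\xi*f)$ has symbol $ik_j\,m_\xi(k)$, and near the singular sphere $|k|\sim s\sim|\xi|$, so the extra factor $|k|$ is $O(|\xi|)$ and exactly cancels the $|\xi|^{-1}$ gain while $|k|\,|m_\xi(k)|$ is no more singular than $|m_\xi(k)|$; the same slicing–plus–Schur argument then produces the $|\xi|$–independent constant in (\ref{SU2-o}). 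The extension to all $0\le|\alpha|\le s+1$ is identical.
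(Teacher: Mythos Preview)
The paper does not prove this lemma; it is quoted verbatim from Sylvester--Uhlmann \cite{SU}, so there is no ``paper's own proof'' to compare against. Your outline does follow the Sylvester--Uhlmann strategy: rotate so that $\xi=s(e_1+ie_2)$, take the partial Fourier transform in $k'=(k_2,\dots,k_d)$, and reduce to a one--dimensional weighted estimate for the convolution operator with symbol $(k_1-z)^{-1}$, uniformly in the pole $z$. The partial--fraction bookkeeping and the identification of $|z_+-z_-|\gtrsim |\xi|$ as the source of the gain are correct.

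The gap is in your proof of the one--dimensional estimate. The sentence ``the exponential factor is integrable uniformly in $z$'' is false precisely in the regime you flag as delicate: when $\operatorname{Im}z\to 0$ one has $|e^{iz(x_1-y_1)}|\to 1$ on the relevant half--line, so the $L^1$ norm of the one--sided exponential kernel is $\sim |\operatorname{Im}z|^{-1}$ and Schur's test yields a constant that blows up. Your secondary claim, that $\delta>-1$ makes $(1+y_1^2)^{-\delta-1}$ integrable, is also wrong: that function is integrable only for $\delta>-1/2$, so even the endpoint Schur bound fails for $-1<\delta\le -1/2$.

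What Sylvester--Uhlmann actually use is the pointwise domination $|e^{iz(x_1-y_1)}|\le 1$ on the half--line (boundedness, not integrability), which gives
\[
|u(x_1)|\le \int_{y_1\lessgtr x_1}|f(y_1)|\,dy_1,
\]
and then a weighted Hardy--type inequality of the form
\[
\Bigl\|\int_{-\infty}^{\,\cdot} g\Bigr\|_{L^2_\delta(\mathbb R)}\le C_\delta\,\|g\|_{L^2_{\delta+1}(\mathbb R)},
\]
which holds exactly for $-1<\delta<0$ and is the true reason this range appears in the hypothesis. Once this uniform one--dimensional bound is in place, your Fubini/Plancherel assembly and the passage to $H^s_\delta$ by commuting $\partial^\alpha$ through the convolution are fine.
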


These estimates are the corner stone   of  the proof of the uniqueness of smooth potentials \cite{SU} and of the proof of the stability 
estimates  in \cite{A88}.  By using this lemma, we can obtain a solution to the general  equation
\bean \label{cgoequation}
\Delta \psi + \xi \cdot \nabla \psi + q \psi = f
\eean
satisfying some decaying property as in the following lemma.
\begin{lemma}
\label{lemma:SUtheorem23alpha}
Let $s > (d/2)+1$ be an integer.
Let $\xi \in \mathbb{C}^{d}$ satisfy
$\xi \cdot \xi = 0$ and $\lvert \xi \rvert \geq 2$.
Let $f \in H^{s} ( \Omega )$.
Then there exists constants $C_{1} > 0$ and $C_{2} > 0$ depending only on
$d, s,$ and $\Omega$
such that if
\[
\lvert \xi \rvert
\geq C_{1} \lVert q \rVert_{H^{s} ( \Omega )},
\]
then there exists a solution $\psi \in H^{s} ( \Omega )$
to the equation
{\rm (\ref{cgoequation})}
satisfying the estimates
\bean \label{est:first}
\lVert \psi \rVert_{H^{s} ( \Omega )}
\leq \frac{C_2}{\lvert \xi \rvert }
\lVert f \rVert_{H^{s} ( \Omega )},\\  \label{est:second}
\lVert \psi \rVert_{H^{s+1} ( \Omega )}
\leq C_{2}
\lVert f \rVert_{H^{s} ( \Omega )}.
\eean
\end{lemma}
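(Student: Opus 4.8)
The plan is to construct the solution $\psi$ to \eqref{cgoequation} by a standard Neumann-series / fixed-point argument built on the solution operator $\mathcal K_\xi$ from Lemma~\ref{lemma:SUprop21alpha}. The key observation is that equation \eqref{cgoequation} can be rewritten as $\Delta\psi + \xi\cdot\nabla\psi = f - q\psi$, so any solution satisfies the integral equation $\psi = \mathcal K_\xi(f) - \mathcal K_\xi(q\psi) = \mathcal K_\xi(f) - K_\xi * (q\psi)$. This suggests defining the affine map $T(\psi) := \mathcal K_\xi(f - q\psi)$ and seeking a fixed point. First I would verify that $T$ maps $H^s(\Omega)$ (extended by zero, or suitably restricted since $q$ and $f$ are supported in $\Omega$) into itself and is a contraction.

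The central estimate comes from applying \eqref{SU1-o} with a fixed weight $\delta\in(-1,0)$. For the contraction property, I would estimate
\begin{equation*}
\lVert T(\psi_1) - T(\psi_2) \rVert_{H^s_\delta} = \lVert K_\xi * \bigl( q(\psi_2 - \psi_1) \bigr) \rVert_{H^s_\delta} \leq \frac{C}{\lvert\xi\rvert} \lVert q(\psi_2 - \psi_1) \rVert_{H^s_{\delta+1}}.
\end{equation*}
Here the main technical point is the multiplicative estimate: because $s > (d/2)+1 > d/2$, the Sobolev space $H^s(\Omega)$ is a Banach algebra and embeds in $L^\infty$, so one has $\lVert q\,v \rVert_{H^s} \leq C_0 \lVert q \rVert_{H^s} \lVert v \rVert_{H^s}$ (after accommodating the compactly supported weights, which are harmless on $\Omega$). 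Combining these gives $\lVert T(\psi_1)-T(\psi_2)\rVert_{H^s_\delta} \leq (CC_0/\lvert\xi\rvert)\lVert q\rVert_{H^s(\Omega)}\lVert\psi_1-\psi_2\rVert_{H^s_\delta}$. Choosing $C_1 = 2CC_0$, the hypothesis $\lvert\xi\rvert \geq C_1\lVert q\rVert_{H^s(\Omega)}$ forces the Lipschitz constant below $1/2$, so $T$ is a contraction on $H^s_\delta$ and the Banach fixed-point theorem produces a unique solution $\psi$.

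To obtain the quantitative bounds \eqref{est:first}–\eqref{est:second}, I would exploit the fixed-point identity $\psi = \mathcal K_\xi(f) - K_\xi*(q\psi)$ directly. Applying \eqref{SU1-o} to both terms,
\begin{equation*}
\lVert \psi \rVert_{H^s_\delta} \leq \frac{C}{\lvert\xi\rvert}\lVert f \rVert_{H^s_{\delta+1}} + \frac{CC_0}{\lvert\xi\rvert}\lVert q\rVert_{H^s(\Omega)}\lVert\psi\rVert_{H^s_\delta},
\end{equation*}
and absorbing the last term (again using the smallness from $\lvert\xi\rvert \geq C_1\lVert q\rVert_{H^s}$) yields $\lVert\psi\rVert_{H^s_\delta}\leq (C_2/\lvert\xi\rvert)\lVert f\rVert_{H^s_{\delta+1}}$, which gives \eqref{est:first} after passing from weighted norms on $\mathbb R^d$ back to norms on the bounded set $\Omega$ (where the weights $(1+\lvert\cdot\rvert^2)^\delta$ are bounded above and below). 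For the gain-of-one-derivative estimate \eqref{est:second}, I would instead apply the smoothing bound \eqref{SU2-o} to $\psi = \mathcal K_\xi(f - q\psi)$, obtaining $\lVert\psi\rVert_{H^{s+1}_\delta}\leq C\lVert f - q\psi\rVert_{H^s_{\delta+1}} \leq C\lVert f\rVert_{H^s_{\delta+1}} + CC_0\lVert q\rVert_{H^s}\lVert\psi\rVert_{H^s_\delta}$, and then substituting the already-established bound \eqref{est:first} for $\lVert\psi\rVert_{H^s_\delta}$ controls the second term by $\lVert f\rVert_{H^s}$.

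I expect the main obstacle to be bookkeeping around the weighted spaces $H^s_\delta(\mathbb R^d)$ versus the unweighted space $H^s(\Omega)$: one must extend $f$ and $q$ off $\Omega$ (zero extension works since their supports lie in $\Omega$) to apply the convolution estimates on all of $\mathbb R^d$, verify the algebra property $\lVert qv\rVert_{H^s_{\delta+1}}\lesssim \lVert q\rVert_{H^s}\lVert v\rVert_{H^s_\delta}$ in the weighted setting with the compactly supported factor $q$, and finally restrict back to $\Omega$. Since all relevant functions live in the fixed bounded region, the weights only contribute dimensional constants depending on $\Omega$, so this reduces to careful but routine functional-analytic verification rather than a genuine difficulty.
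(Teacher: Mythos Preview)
Your approach is essentially identical to the paper's: both invert $I - \mathcal K_\xi(q\,\cdot)$ by a Neumann series / contraction argument in the weighted Sobolev spaces of Lemma~\ref{lemma:SUprop21alpha}, using that $H^s$ is an algebra for $s>d/2$, and then restrict to $\Omega$. The one slip is your claim that zero extension of $q$ and $f$ works ``since their supports lie in $\Omega$'' --- the lemma assumes only $q,f\in H^s(\Omega)$, so the paper instead takes a bounded Sobolev extension to $H^s(\mathbb R^d)$ and multiplies by a smooth cutoff $\chi\equiv 1$ near $\overline\Omega$; with that correction your outline matches the paper.
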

\proof  
Let $\chi  $ be a $C^\infty$ compactly supported function satisfying $\chi=1$ on a neighborhood 
of $\overline \Omega$. Denote $q_0$ and $f_0$ respectively the extensions in $H^s(\mathbb R^d)$
of the functions $q$ and $f$, satisfying $ \lVert q \rVert_{H^{s} ( \mathbb R^d )} \leq \lVert q \rVert_{H^{s} ( \Omega )},
\, $  $ \lVert f \rVert_{H^{s} ( \mathbb R^d )} \leq \lVert f \rVert_{H^{s} ( \Omega )}$ \cite{McL}, and denote 
 $\tilde q = \chi q_0$ and $\tilde f = \chi f_0$.\\
 
Simple calculation shows that 
\bean \label{ineq1}
  \lVert q \rVert_{H^{s} ( \mathbb R^d )} \leq C\lVert q \rVert_{H^{s} ( \Omega )},\;\; 
   \lVert f \rVert_{H^{s} ( \mathbb R^d )} \leq C\lVert f \rVert_{H^{s} ( \Omega )},
\eean
where $C>0$ is a constant than only depends on $s$ and $d$.\\

 Let $\tilde \psi$
be a solution to the equation \eqref{cgoequation} in the whole space with $\tilde f$ and $\tilde q$ substituting 
 $f$ and $q$ respectively.
Lemma \ref{lemma:SUprop21alpha} shows that the linear  operator $\mathcal K_\xi(\tilde q\cdot)$ is bounded from 
$H^s(\mathbb R^d)$ to itself. Let $I_d$ be the identity operator from $H^s(\mathbb R^d)$ to itself. 
The decaying behavior \eqref{SU2-o} implies that the operator $\mathcal K_\xi(\tilde q\cdot)$
is a contraction for $|\xi|$ large enough, and hence 
 $I_d - \mathcal K_\xi(\tilde q\cdot )$  becomes  invertible. \\
 The estimates \eqref{est:first}  and  \eqref{est:second} follow
  immediately by taking 
$\psi$ the restriction of $\tilde \psi$ to the domain $\Omega$, from the 
convergence of the Neumann series 
 \[\tilde \psi= -\left(I_d - \mathcal K_\xi( \tilde q\cdot )\right)^{-1} \mathcal K_\xi( \tilde q)
=  -\sum_{p=0}^\infty \left(\mathcal K_\xi( \tilde q\cdot )\right)^{p+1},\]
for large $|\xi|$, and  inequalities \eqref{ineq1}. 
\endproof
The needed CGO solutions are constructed as follows.
\begin{proposition}\label{prop:CGO}
Let $s > d/2$ be an integer.
Let $\xi \in \mathbb{C}^{d}$ satisfy
$\xi \cdot \xi = 0$ and $\lvert \xi \rvert \geq 2$.
Define the constants $C_{1}$ and $C_{2}$ as in
Lemma~\ref{lemma:SUtheorem23alpha}.
Then if
\[
\lvert \xi \rvert \geq C_{1} \lVert q \rVert_{H^{s} ( \Omega
)}
\]
then there exists a solution $u$ to the equation {\rm (\ref{eq:eq})}
with the form of
\begin{equation}\label{eq:CGOform}
u(x) = \exp \left( \frac{\xi}{2} \cdot x \right)
\bigl(1 + \psi (x) \bigr) ,
\end{equation}
where $\psi$ has the estimates
\bea
\lVert \psi \rVert_{H^{s} ( \Omega )} \leq \frac{C_{2}}{\lvert \xi \rvert} \lVert q \rVert_{H^{s} ( \Omega )},\\
\lVert \psi \rVert_{H^{s+1} ( \Omega )} \leq C_{2} \lVert q \rVert_{H^{s} ( \Omega )}.
\eea
\end{proposition}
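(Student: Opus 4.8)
The plan is to insert the prescribed ansatz into the equation, exploit the nullity condition $\xi\cdot\xi = 0$ to cancel the leading exponential weight, and thereby reduce the construction of $u$ to the already-solved equation \eqref{cgoequation}; the two estimates on $\psi$ then transfer directly from Lemma~\ref{lemma:SUtheorem23alpha}. In other words, the entire work lies in recognizing that the homogeneous Schr\"odinger equation for $u$ is equivalent, after conjugation by the exponential, to \eqref{cgoequation} with a specific right-hand side.

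First I would set $v = 1 + \psi$ and write $u = e^{\frac{\xi}{2}\cdot x}v$, then compute the action of $\Delta + q$ on $u$. Using the conjugation identity
\[
\Delta\bigl(e^{\frac{\xi}{2}\cdot x} v\bigr) = e^{\frac{\xi}{2}\cdot x}\Bigl(\Delta v + \xi\cdot\nabla v + \tfrac{1}{4}(\xi\cdot\xi)\,v\Bigr),
\]
and discarding the quadratic term because $\xi\cdot\xi = 0$, one finds
\[
(\Delta + q)u = e^{\frac{\xi}{2}\cdot x}\bigl(\Delta\psi + \xi\cdot\nabla\psi + q\psi + q\bigr).
\]
Since the exponential never vanishes, $u$ of the form \eqref{eq:CGOform} solves the homogeneous equation $(\Delta + q)u = 0$ in $\Omega$ (that is, \eqref{eq:eq} with vanishing source) if and only if $\psi$ satisfies
\[
\Delta\psi + \xi\cdot\nabla\psi + q\psi = -q \quad\text{in } \Omega,
\]
which is exactly equation \eqref{cgoequation} with right-hand side $f = -q$.

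Next I would verify the hypotheses of Lemma~\ref{lemma:SUtheorem23alpha} for this choice of $f$. As $q \in H^{s}(\Omega)$, the datum $f = -q$ lies in $H^{s}(\Omega)$ with $\lVert f \rVert_{H^{s}(\Omega)} = \lVert q \rVert_{H^{s}(\Omega)}$, and the standing smallness condition $\lvert \xi \rvert \geq C_{1}\lVert q \rVert_{H^{s}(\Omega)}$ is precisely the one required by that lemma. Applying it produces a solution $\psi \in H^{s}(\Omega)$ of the reduced equation obeying
\[
\lVert \psi \rVert_{H^{s}(\Omega)} \leq \frac{C_{2}}{\lvert \xi \rvert}\lVert q \rVert_{H^{s}(\Omega)}, \qquad \lVert \psi \rVert_{H^{s+1}(\Omega)} \leq C_{2}\lVert q \rVert_{H^{s}(\Omega)},
\]
which are the two asserted bounds. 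Retracing the reduction, $u = e^{\frac{\xi}{2}\cdot x}(1+\psi)$ is then the desired CGO solution.

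I do not expect a genuine obstacle here; the delicate points are bookkeeping rather than conceptual. One must ensure the products $q$ and $q\psi$ are legitimately controlled in $H^{s}(\Omega)$, which is where the Sobolev-algebra threshold $s > d/2$ enters, guaranteeing $\lVert q\psi \rVert_{H^{s}} \lesssim \lVert q \rVert_{H^{s}}\lVert \psi \rVert_{H^{s}}$ so that the term $q\psi$ in \eqref{cgoequation} is meaningful and the contraction argument of Lemma~\ref{lemma:SUtheorem23alpha} applies. The one point requiring care is the compatibility of the regularity indices: Lemma~\ref{lemma:SUtheorem23alpha} is stated for $s > (d/2)+1$, so in invoking it under the present hypothesis $s > d/2$ I would either confirm that the algebra and decay estimates it relies on already hold at this lower threshold, or apply it at the appropriate index and note that the resulting $\psi$ retains the claimed $H^{s}$ and $H^{s+1}$ control. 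Apart from this index-matching verification, the argument is a clean substitution followed by a direct appeal to the preceding lemma.
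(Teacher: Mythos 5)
Your proposal is correct and follows exactly the paper's own (very brief) proof: substitute the ansatz, use $\xi\cdot\xi=0$ to reduce to equation \eqref{cgoequation} with $f=-q$, and invoke Lemma~\ref{lemma:SUtheorem23alpha}. Your closing remark about the index mismatch ($s>d/2$ in the proposition versus $s>(d/2)+1$ in the lemma) is a real inconsistency in the paper's statements that the paper silently ignores, so flagging it is appropriate rather than a defect of your argument.
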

\begin{proof}
Substituting (\ref{eq:CGOform}) into (\ref{eq:eq}),
we have
\[
\Delta \psi + \xi \cdot \nabla \psi + q \psi = - q.
\]
Then by Lemma~\ref{lemma:SUtheorem23alpha},
we obtain this proposition.
\end{proof}

\section{Proof of the stability estimate}

This section is devoted to the proof of Theorem~\ref{thm}. For $f \in H^{1/2}(\partial \Omega)$,  
let $v$ be a solution to
 \begin{equation}\label{eq:eq2}
\bigl( \Delta + q(x) \bigr) v(x) =0 \quad \mbox{ in }\quad
\Omega,
\end{equation}
and satisfying the Dirichlet condition $u=f$ on $\partial \Omega$, and define
 linear map (DtN) $\Phi_0[q]:
H^{1/2}(\partial\Omega)\to H^{-1/2}(\partial\Omega)$ by
\[
\Phi_0(f) =
\frac{\partial v}{\partial \nu} \biggr|_{\partial \Omega}.
\]

We first observe  that  the following inequality 
\bean \label{diff-eq}
\Phi[q, a, z](f) -\Phi[q,a,z](0) =\Phi_0[q](f),      
\eean
 holds for all $f \in H^{1/2}(\partial \Omega)$. 
 \begin{proposition}\label{prop}
Assume that $q_1$ and $q_2$ are two potentials with 
associated (DtN) maps $\Phi_0[q_1] $ and $\Phi_0[q_2] $, respectively. Let $s
> d/2$, $M \geq 1$. Suppose $\lVert q_{j} \rVert_{H^{s}
( \Omega )} \leq M$ $(j = 1, 2)$ and $\mathrm{supp} ( q_{1} - q_{2} )
\subset \Omega$. Then if   $\lVert
  \Phi_0[q_1] - \Phi_0[q_2] \rVert < 1$, the following 
  inequality  
\begin{equation}\label{firstestimate}
\lVert q_1-q_2 \rVert_{H^{-s} ( \mathbb{R}^{d} )}
\leq 
C \left(- \log \left(\lVert \Phi_0[q_1] - \Phi_0[q_2]\rVert_\star
\right) \right)^{-(s-d/2)}
\end{equation}
holds, where $C>0 $ only depends  on $d, s, \Omega, $ and $ M$.

\end{proposition}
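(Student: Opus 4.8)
The plan is to adapt Alessandrini's argument: turn the difference of the two linear DtN maps into an integral identity tested against products of solutions, insert the CGO solutions of Proposition~\ref{prop:CGO} to read off the Fourier transform of $q_1-q_2$ at an arbitrary frequency, and then interpolate between a low-frequency band controlled by the measured data and a high-frequency tail controlled by the a priori smoothness bound.

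First I would set up the integral identity. For Dirichlet data $f_j\in H^{1/2}(\partial\Omega)$ let $v_j$ solve $(\Delta+q_j)v_j=0$ in $\Omega$ with $v_j|_{\partial\Omega}=f_j$. Green's identity together with $\Delta v_j=-q_jv_j$ gives the standard relation
\[
\int_\Omega (q_1-q_2)\,v_1 v_2\,dx=\langle(\Phi_0[q_1]-\Phi_0[q_2])f_1,\,f_2\rangle_{\partial\Omega},
\]
whose right-hand side is bounded, by the very definition of $\lVert\cdot\rVert_\star$ as an operator norm $H^{1/2}(\partial\Omega)\to H^{-1/2}(\partial\Omega)$, by $\lVert\Phi_0[q_1]-\Phi_0[q_2]\rVert_\star\,\lVert f_1\rVert_{H^{1/2}}\lVert f_2\rVert_{H^{1/2}}$. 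Write $\epsilon:=\lVert\Phi_0[q_1]-\Phi_0[q_2]\rVert_\star$ for brevity.

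Next I would insert the CGO solutions. Fix a target frequency $\zeta\in\mathbb R^d$ and a large parameter $\rho$. Since $d\ge3$ one can select $\xi_1,\xi_2\in\mathbb C^d$ with $\xi_j\cdot\xi_j=0$, $\lvert\xi_j\rvert\approx\rho$, and $\tfrac12(\xi_1+\xi_2)=-i\zeta$, provided $\rho\gtrsim\lvert\zeta\rvert$ and $\rho\ge C_1M$. Proposition~\ref{prop:CGO} then produces solutions $v_j=\exp(\tfrac12\xi_j\cdot x)(1+\psi_j)$ with $\lVert\psi_j\rVert_{H^s(\Omega)}\le C_2M/\rho$. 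Substituting into the identity and using $\mathrm{supp}(q_1-q_2)\subset\Omega$ to extend $q_1-q_2$ by zero, the leading contribution of $v_1v_2$ is exactly $\widehat{(q_1-q_2)}(\zeta)$, so that
\[
\widehat{(q_1-q_2)}(\zeta)=\langle(\Phi_0[q_1]-\Phi_0[q_2])f_1,f_2\rangle_{\partial\Omega}-\int_\Omega (q_1-q_2)\,e^{-i\zeta\cdot x}\bigl(\psi_1+\psi_2+\psi_1\psi_2\bigr)\,dx.
\]
The boundary data obey $\lVert f_j\rVert_{H^{1/2}(\partial\Omega)}\le Ce^{C\rho}$, while the embedding $H^s\hookrightarrow L^\infty$ (valid since $s>d/2$) and the bound on $\psi_j$ control the remainder by $CM^2/\rho$. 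Hence, for $\lvert\zeta\rvert\le c\rho$,
\[
\lvert\widehat{(q_1-q_2)}(\zeta)\rvert\le Ce^{C\rho}\epsilon+\frac{CM^2}{\rho}.
\]

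Finally I would pass to the $H^{-s}(\mathbb R^d)$ norm by splitting the frequency integral at $\lvert\zeta\rvert=\rho$. On the band $\lvert\zeta\rvert\le\rho$ the pointwise estimate, together with the convergence of $\int_{\mathbb R^d}(1+\lvert\zeta\rvert^2)^{-s}\,d\zeta$ (again because $s>d/2$), bounds the contribution by $C(e^{C\rho}\epsilon+M^2/\rho)^2$. On the tail $\lvert\zeta\rvert>\rho$ I would use only the uniform bound $\lvert\widehat{(q_1-q_2)}(\zeta)\rvert\le\lVert q_1-q_2\rVert_{L^1(\Omega)}\le CM$, giving
\[
\int_{\lvert\zeta\rvert>\rho}(1+\lvert\zeta\rvert^2)^{-s}\lvert\widehat{(q_1-q_2)}\rvert^2\,d\zeta\le CM^2\!\!\int_{\lvert\zeta\rvert>\rho}(1+\lvert\zeta\rvert^2)^{-s}\,d\zeta\le CM^2\rho^{-2(s-d/2)},
\]
which is precisely where the exponent $s-d/2$ is produced. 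Choosing $\rho\sim-\log\epsilon$ renders the exponentially amplified boundary term $e^{C\rho}\epsilon$ negligible, after which the algebraic tail dictates the rate $(-\log\epsilon)^{-(s-d/2)}$. I expect the main obstacle to be exactly this final optimization: the boundary pairing is amplified by the exponential growth $e^{C\rho}$ of the CGO boundary data, so $\rho$ cannot be taken arbitrarily large and must be balanced against the polynomially small tail, which is what forces the logarithmic rather than H\"older character of the estimate. A secondary technical point requiring care is the explicit construction of the pair $\xi_1,\xi_2$ and the verification that all constants stay uniform in $M$ through the admissibility constraint $\rho\ge C_1M$.
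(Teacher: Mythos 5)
Your overall strategy coincides with the paper's: the Alessandrini integral identity, insertion of the CGO solutions of Proposition~\ref{prop:CGO} with $\tfrac12(\xi_1+\xi_2)$ equal to $-i$ times the target frequency, a low/high frequency split of the $H^{-s}$ norm, and a final logarithmic optimization balancing the exponentially amplified boundary pairing against an algebraic tail. The tail estimate and the $e^{C\rho}$ bound on the traces are as in the paper. However, there is a genuine gap in your treatment of the remainder $\int_\Omega(q_1-q_2)e^{-i\zeta\cdot x}(\psi_1+\psi_2+\psi_1\psi_2)\,dx$. You bound it by the absolute quantity $CM^2/\rho$ (an $L^1\times L^\infty$ estimate). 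That term survives into your final inequality as $CM^2/\rho$, and since the exponential amplification of the boundary term forces $\rho\lesssim-\log\epsilon$, your argument can never produce a rate better than $(-\log\epsilon)^{-1}$. For $s>d/2+1$ --- exactly the range in which Theorem~\ref{thm} invokes this proposition --- the claimed exponent $s-d/2$ exceeds $1$, so your concluding step fails: it is the remainder, not the tail, that dictates the rate in your version, and \eqref{firstestimate} does not follow.

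The paper avoids this by estimating the same remainder through the $H^{-s}$--$H^{s}$ duality as $\frac{CM^2}{\rho}\lVert q_1-q_2\rVert_{H^{-s}(\mathbb{R}^d)}$, i.e.\ keeping a factor of the unknown norm itself, and then \emph{fixing} the CGO parameter at a value $\rho_M$ depending only on $M$ (large enough that the coefficient $CM^4/\rho_M^2$ is at most $1/2$), so that this contribution is absorbed into the left-hand side. The only quantity sent to infinity is the frequency cutoff $R$, which is decoupled from $\rho$ and optimized as $R\sim-\log\epsilon$ against the tail $R^{-(2s-d)}$ and the boundary term $e^{C(\rho_M+R)}\lVert\Phi_0[q_1]-\Phi_0[q_2]\rVert_\star^2$. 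That absorption step is the missing idea; once you replace your $L^\infty$ bound on the remainder by the duality bound and absorb, the rest of your argument goes through and yields the full exponent $s-d/2$.
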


Let $v_{j}$ be a solution to {\rm (\ref{eq:eq2})} with $q = q_{j}, \,( j=1, 2),$ 
then we have
\bean \label{prop:identity}
 \int_{\Omega}
 ( q_{2} - q_{1} ) v_{1} v_{2} \,
d x = \bigl\langle
 (\Phi_0[q_1] - \Phi_0[q_2] ) v_{1} |_{\partial \Omega} , \,
 v_{2} |_{\partial \Omega}
\bigr\rangle_{H^{-1/2}, H^{1/2}}.
\eean

Now we would like to estimate $\widehat{q_2-q_1}(\eta), \; \eta \in \mathbb R^d$ in terms of the boundary measurements. 
The principal idea is to estimate  the low frequencies using  products of CGO's solutions, and to approximate  the  high frequencies 
through the regularity of the difference. 
\begin{lemma}\label{lemma:Fourierest}
Let $s > d/2$ be an integer and $M \geq  1$. Assume $\lVert q_{l}
\rVert_{H^{s} ( \Omega )} \leq M$, $\mathrm{supp} ( q_{1} - q_{2}) \subset
\Omega$.  Then there exist  constants $C_M \geq 1 $ such that  the following inequality 
\begin{align}\label{eq:lowerHsestimate}
\lVert q_{2} - q_{1}  \rVert_{H^{-s} ( \mathbb{R}^{d} )}^2 & \leq 
C_M \left( \frac{1}{R^{2s-d}}+ \exp ( C R)  \lVert \Phi_0[q_1] - \Phi_0[q_2]  \rVert^2_\star \right),
\end{align}
holds for all $R>0$, where $C>0$  only depends on 
$s, \Omega $ and $d$. 

\end{lemma}
\proof
In the following proof,  $C$ stands for a general   constant  strictly larger than one
depending only on $d, s$ and $\Omega$. \\

By
Proposition~\ref{prop:CGO}, we can construct CGO solutions $v_{j}
(x)$ to the equation (\ref{eq:eq2}) with $q = q_{j}$,  having the form
of
\[
v_{j} (x) = \exp \left( \frac{\xi_{j}}{2} \cdot x \right)
\bigl( 1 + \psi_{j} (x) \bigr)
\]
for $j = 1, 2$,
and we have
\begin{align}
& \int_{\Omega}
 ( q_{2} - q_{1} )
 \exp \left( \frac{1}{2} ( \xi_{1} + \xi_{2} ) \cdot x \right)
 ( 1 + \psi_{1} + \psi_{2} + \psi_{1} \psi_{2} ) \,
d x \notag \\
& =  \bigl\langle
 (\Phi_0[q_1] - \Phi_0[q_2] ) v_{1} |_{\partial \Omega} , \,
 v_{2} |_{\partial \Omega}
\bigr\rangle_{H^{-1/2}, H^{1/2}}, 
 \label{eq:identityCGO}
\end{align}
from identity~\ref{prop:identity}, where $\psi_{j}$ satisfies
\[
\lVert \psi_{j} \rVert_{H^{s} ( \Omega )} \leq \frac{C}{\lvert
\xi_{j} \rvert} \lVert q_{j} \rVert_{H^{s} ( \Omega )},
\]
if $\xi_{j} \in \mathbb{C}^{d}$ satisfies
$\xi_{j} \cdot \xi_{j} = 0$,
$\lvert \xi_{j} \rvert \geq 2$ and
\begin{equation}\label{eq:xil}
\lvert \xi_{j} \rvert \geq C_{1} \lVert q_{j} \rVert_{H^{s} (
\Omega )} .
\end{equation}

Now, let  $\eta \in \mathbb{R}^{d}$ and $\rho>0$.  We assume that $\alpha, \zeta \in \mathbb{R}^{d}$
satisfy
\begin{equation}\label{eq:kzeta}
\alpha \cdot \eta = \alpha \cdot \zeta = \eta \cdot \zeta = 0,  |\alpha| = \rho, \mbox{
and } \lvert \zeta \rvert^{2} = \lvert \eta \rvert^{2} +  \rho^{2}.
\end{equation}
Define $\xi_{1}$ and $\xi_{2}$ as
\[
\xi_{1} = \zeta + i \alpha - i \eta\quad \mbox{ and }\quad \xi_{2}
= - \zeta - i \alpha - i  \eta .
\]
Then we have
\[
\xi_{j} \cdot \xi_{j} = 0, \
\lvert \xi_{j} \rvert^{2} = \lvert \zeta
\rvert^{2} +  \lvert \eta
\rvert^{2} + \rho^{2} = 2 \lvert \zeta
\rvert^{2} ~ ( l = 1, 2 ) \mbox{ and } \frac{1}{2} ( \xi_{1} +
\xi_{2} ) = - i  \eta .
\]
Hence by (\ref{eq:identityCGO}), we immediately obtain that
\begin{align}
\widehat{q_2-q_1} ( \eta )
& = - \int_{\Omega}
 ( q_{2} - q_{1} ) \exp ( - i  \eta \cdot x )
 ( \psi_{1} + \psi_{2} + \psi_{1} \psi_{2} ) \,
d x \notag \\
& \hspace*{3ex} + 
  \bigl\langle
 (\Phi_0[q_1] - \Phi_0[q_2] ) v_{1} |_{\partial \Omega} , \,
 v_{2} |_{\partial \Omega}
\bigr\rangle_{H^{-1/2}, H^{1/2}},
 \label{eq:Fourierqtilde1}
\end{align}
provided $\lvert \xi_{j} \rvert \geq 2$ and (\ref{eq:xil}) are
satisfied. Suppose now that $\chi \in
C_{0}^{\infty} ( \Omega )$ satisfies $\chi \equiv 1$ near $\Omega$.  
We first estimate the first term on the right hand side
of \eqref{eq:Fourierqtilde1} by
\begin{align*}
& \left\lvert
 \int_{\Omega}
  ( q_{2} - q_{1} ) \exp ( - i  \eta \cdot x )
  ( \psi_{1} + \psi_{2} + \psi_{1} \psi_{2} ) \,
 d x
\right\rvert \\
& = \left\lvert
 \int_{\Omega}
  ( q_{2} - q_{1} ) \exp ( - i  \eta \cdot x )
  \chi ( \psi_{1} + \psi_{2} + \psi_{1} \psi_{2} ) \,
 d x
\right\rvert \\
& \leq \lVert q_{2} - q_{1} \rVert_{H^{-s} ( \Omega )}
\bigl\lVert
 \chi ( \psi_{1} + \psi_{2} + \psi_{1} \psi_{2} )
\bigr\rVert_{H^{s} ( \Omega )}\\
& \leq \lVert q_{2} - q_{1} \rVert_{H^{-s} ( \mathbb{R}^{d} )}
\lVert \chi \rVert_{H^{s} ( \Omega )}
\bigl(
 \lVert \psi_{1} \rVert_{H^{s} ( \Omega )}
 + \lVert \psi_{2} \rVert_{H^{s} ( \Omega )}
 + \lVert \psi_{1} \rVert_{H^{s} ( \Omega )}
 \lVert \psi_{2} \rVert_{H^{s} ( \Omega )}
\bigr) \\
& \leq \frac{C M^2}
         {\lvert \zeta \rvert} \lVert q_{2} - q_{1}  \rVert_{H^{-s} ( \mathbb{R}^{d} )}.
\end{align*}

On the other hand, by  straightforward calculations, we have
\begin{align*}
\bigl\lVert
 v_{j} |_{\partial \Omega}
\bigr\rVert_{L^{2} ( \partial \Omega )}, \bigl\lVert
 \nabla v_{j} |_{\partial \Omega}
\bigr\rVert_{L^{2} ( \partial \Omega )}
& \leq   CM \exp ( C \lvert \zeta \rvert ),
\end{align*}
for $j=1, 2,$ 
which by interpolation \cite{McL}, provide
\[
\bigl\lVert
 v_{l} |_{\partial \Omega}
\bigr\rVert_{H^{1/2} ( \partial \Omega )}
\leq C M \exp ( C \lvert \zeta \rvert ).
\]
Therefore, we can estimate the second term of the right-hand side of
(\ref{eq:Fourierqtilde1}) by
\begin{align*}
 \bigl\langle
 (\Phi_0[q_1] - \Phi_0[q_2] ) v_{1} |_{\partial \Omega} , \,
 v_{2} |_{\partial \Omega}
\bigr\rangle_{H^{-1/2}, H^{1/2}}& \leq\bigl\lVert
 v_{1} |_{\partial \Omega}
\bigr\rVert_{H^{1/2} ( \partial \Omega )}
\bigl\lVert
 v_{2} |_{\partial \Omega}
\bigr\rVert_{H^{1/2} ( \partial \Omega )} \lVert \Phi_0[q_1] - \Phi_0[q_2]  \rVert_\star,
 \\
& \leq C M^2 \exp ( C \lvert \zeta \lvert ) \lVert \Phi_0[q_1] - \Phi_0[q_2]   \rVert_\star.
\end{align*}
Summing up, we have shown that  for $\eta \in
\mathbb{R}^{d}$ if we take $\alpha$
and $\zeta$ satisfying the conditions (\ref{eq:kzeta}), and
\begin{equation}\label{eq:zeta}
\rho \geq C_{1}M+1,
\end{equation}
then
\begin{align}
\lvert \widehat{ q_2-q_1} (\eta ) \rvert & \leq 
\frac{C M^2}
         {\rho} \lVert q_{2} - q_{1}  \rVert_{H^{-s} ( \mathbb{R}^{d} )} +
          C M^2\exp ( C (\rho +  \lvert \eta \lvert ) ) \lVert \Phi_0[q_1] - \Phi_0[q_2]  \rVert_\star,
\label{eq:Fouriersummingup}
\end{align}
holds. Integrating the last inequality with respect to $\eta$ 
over $B_R(0)= \{ \eta \in \mathbb R^d:\,  \lvert \eta \lvert <R \}$, and taking into account the fact that
$s>d/2$, we 
obtain 
\begin{align}
\int_{B_R(0)}\left|\widehat{ q_2-q_1} (\eta ) \right|^2 (1+ \lvert \eta \lvert^2)^{-s} d\eta & \leq 
\frac{C M^4}
         {\rho^2} \lVert q_{2} - q_{1}  \rVert_{H^{-s} ( \mathbb{R}^{d} )}^2 +
          CM^4 \exp ( C (\rho + R) ) \lVert \Phi_0[q_1] - \Phi_0[q_2]  \rVert^2_\star.
\label{eq:Fourierparceval}
\end{align}
Now since $q_2-q_1$ belong to $H^{s} ( \mathbb{R}^{d} )$, we have 
\bean\label{eq:SobolevHsestimate}
\lVert q_{2} - q_{1}  \rVert_{H^{-s} ( \mathbb{R}^{d} )}^2 
 \leq \int_{B_R(0)}\left| \widehat{ q_2-q_1} (\eta ) \right|^2 (1+ \lvert \eta \lvert^2)^{-s} d\eta +\frac{CM^2}{R^{2s-d}}.
\eean
Taking $\rho =  \rho_M $, with $\rho_M^2= 
 (C_1M+1)^2+ 2C^2M^4$, and  combining \eqref{eq:SobolevHsestimate}, and \eqref{eq:Fourierparceval}, we get
\begin{align*}
\lVert q_{2} - q_{1}  \rVert_{H^{-s} ( \mathbb{R}^{d} )}^2 & \leq 
  \frac{CM^2}{R^{2s-d}}+
          C M^4\exp ( C (\rho_M + R) ) \lVert \Phi_0[q_1] - \Phi_0[q_2]  \rVert^2_\star,\\
          &\leq  C_M \left( \frac{1}{R^{2s-d}}+ \exp ( C R)  \lVert \Phi_0[q_1] - \Phi_0[q_2]  \rVert^2_\star \right),
\end{align*}
which finishes the proof of the Lemma.

\endproof
Now, we shall prove Proposition \ref{prop}.

\proof
Since $2s-d>0$,  there  exists a unique $R_0>0$ satisfying 
\bea
\frac{1}{R_0^{2s-d}}= \exp (CR_0)  \lVert \Phi_0[q_1] - \Phi_0[q_2]  \rVert^2_\star,
\eea

where  $C>0$ is the constant  appearing in  Lemma \eqref{lemma:Fourierest}.
Since $\log(R)/R$ is bounded by $e^{-1}$ for all $R>0$, we have 
\bea
R_0 \geq \frac{ -2\log(\lVert \Phi_0[q_1] - \Phi_0[q_2]  \rVert_\star)}{C+\frac{2s-d}{e}}.
\eea
We  deduce from the estimate \eqref{eq:lowerHsestimate} with $R=R_0$, and the previous 
inequality
\begin{align*}
\lVert q_{2} - q_{1}  \rVert_{H^{-s} ( \mathbb{R}^{d} )}^2 & \leq 
  \frac{2C_M}{R_0^{2s-d}}\\
  &\leq 2C_M \left( \frac{-2\log(\lVert \Phi_0[q_1] - \Phi_0[q_2]  \rVert_\star)}{C+\frac{2s-d}{e}}  \right)^{-(2s-d)},
\end{align*}
which achieves the proof of the Proposition.
\endproof

Next, assuming that the potential $q$ is known we 
 identify the source $a\delta_z$ from the knowledge of  
 $\Phi[q, a, z](0)$.
 
 \begin{lemma} \label{mainlemma}
Let $s > (d/2)+1$ be an integer.
Let $\xi \in \mathbb{C}^{d}$ satisfy
$\xi \cdot \xi = 0$,  $\lvert \xi \rvert \geq 2$, and $\xi  \cdot e_1 >0$.
 There 
exist constants $C_{3}>0$ and $C_{4}>0$ that only depend 
on $\Omega$, $s,$ and $d$  such that  if
\[
\lvert \xi \rvert \geq C_{3} \lVert q \rVert_{H^{s} ( \Omega
)},
\]
then there exist solutions $v$ and $w$ to respectively  the equations {\rm (\ref{eq:eq})},
and
\[
\Delta w + \nabla \log(v^2) \cdot \nabla w = 0 \mbox{ in } \Omega,
\]
with the form of
\bean \label{testfunctions}
v(x) \;=\; \exp \left( \frac{\xi}{2} \cdot x \right)
\bigl( 1 + \psi_v (x) \bigr), &
w(x) \;=\; \xi \cdot x +\psi_w(x),
\eean
satisfying  $v \not=0,  \,  \partial_{x_1} w \not=0$ in $\Omega$, 
where $\psi_v$ and  $\psi_w$ have the estimates
\bean \label{estimate6}
\lVert \psi_v \rVert_{H^{s} ( \Omega )} \leq \frac{C_4}{\lvert \xi \rvert}
 \lVert q \rVert_{H^{s} ( \Omega )},&
\lVert \psi_w \rVert_{H^{s} ( \Omega )} \leq C_4 \lVert q \rVert_{H^{s} ( \Omega )}.
\eean
In addition the function $\phi := vw$  lies in $H^{s} ( \Omega )$, and  satisfies the equation {\rm (\ref{eq:eq})}. 
 \end{lemma}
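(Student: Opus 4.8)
The plan is to obtain $v$ directly from Proposition~\ref{prop:CGO}, to deduce $v\neq0$ from a smallness bound on $\psi_v$, and then to construct $w$ as a linear‑growth perturbation governed by the drift generated by $v$, solving the resulting equation by the same contraction scheme as in Lemma~\ref{lemma:SUtheorem23alpha}.

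First I would take for $v$ the CGO solution of Proposition~\ref{prop:CGO}, so that $v=\exp(\tfrac{\xi}{2}\cdot x)(1+\psi_v)$ solves $(\Delta+q)v=0$ in $\Omega$ with $\lVert\psi_v\rVert_{H^{s}(\Omega)}\le C_2|\xi|^{-1}\lVert q\rVert_{H^{s}(\Omega)}$ and $\lVert\psi_v\rVert_{H^{s+1}(\Omega)}\le C_2\lVert q\rVert_{H^{s}(\Omega)}$. Since $s>d/2$, the embedding $H^{s}(\Omega)\hookrightarrow C(\overline\Omega)$ gives $\lVert\psi_v\rVert_{L^\infty(\Omega)}\le C|\xi|^{-1}\lVert q\rVert_{H^{s}(\Omega)}$, so after enlarging the threshold constant $C_3$ so that $|\xi|\ge C_3\lVert q\rVert_{H^{s}(\Omega)}$ forces $\lVert\psi_v\rVert_{L^\infty(\Omega)}\le\tfrac12$, I get $|1+\psi_v|\ge\tfrac12$ and hence $v\neq0$ throughout $\Omega$ (the exponential factor never vanishes). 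This nonvanishing is what allows me to write the drift as $\nabla\log(v^2)=\tfrac{2\nabla v}{v}=\xi+\widetilde b$, where $\widetilde b:=2\nabla\psi_v/(1+\psi_v)$.

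Next I would seek $w$ in the prescribed form $w=\xi\cdot x+\psi_w$. Substituting into $\Delta w+(\xi+\widetilde b)\cdot\nabla w=0$ and using $\xi\cdot\xi=0$ reduces matters to
\[
\Delta\psi_w+\xi\cdot\nabla\psi_w=-\,\widetilde b\cdot\xi-\widetilde b\cdot\nabla\psi_w ,
\]
which I would solve by the fixed point $\psi_w=\mathcal K_\xi\!\left(-\widetilde b\cdot\xi-\widetilde b\cdot\nabla\psi_w\right)$ on $H^{s}(\Omega)$, after extending $q$ and the coefficients to compactly supported data as in Lemma~\ref{lemma:SUtheorem23alpha}. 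The decisive point is to measure the drift in $H^{s-1}$ rather than $H^{s}$: because $s>d/2+1$, the space $H^{s-1}$ is still a Banach algebra and, since $1+\psi_v$ is bounded away from $0$, $\lVert\widetilde b\rVert_{H^{s-1}}\le C\lVert\nabla\psi_v\rVert_{H^{s-1}}\le C\lVert\psi_v\rVert_{H^{s}}\le C|\xi|^{-1}\lVert q\rVert_{H^{s}}$ is small. Consequently $\lVert\widetilde b\cdot\xi\rVert_{H^{s-1}}\le|\xi|\,\lVert\widetilde b\rVert_{H^{s-1}}\le C\lVert q\rVert_{H^{s}}$ carries no power of $|\xi|$, while estimate \eqref{SU2-o}, which gains one derivative from $H^{s-1}$ to $H^{s}$, together with the algebra property of $H^{s-1}$ yields for the map $u\mapsto\mathcal K_\xi(\widetilde b\cdot\nabla u)$ the bound $\lVert\mathcal K_\xi(\widetilde b\cdot\nabla u)\rVert_{H^{s}}\le C\lVert\widetilde b\rVert_{H^{s-1}}\lVert u\rVert_{H^{s}}\le C|\xi|^{-1}\lVert q\rVert_{H^{s}}\lVert u\rVert_{H^{s}}$. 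For $|\xi|\ge C_3\lVert q\rVert_{H^{s}}$ with $C_3$ large this is a genuine contraction, so the fixed point exists and satisfies $\lVert\psi_w\rVert_{H^{s}(\Omega)}\le C_4\lVert q\rVert_{H^{s}(\Omega)}$, the asserted estimate.

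Finally I would read off $\partial_{x_1}w=\xi\cdot e_1+\partial_{x_1}\psi_w$. Since $s-1>d/2$, $\lVert\partial_{x_1}\psi_w\rVert_{L^\infty(\Omega)}\le C\lVert\nabla\psi_w\rVert_{H^{s-1}}\le C\lVert\psi_w\rVert_{H^{s}}\le C\lVert q\rVert_{H^{s}}$, whereas the hypothesis $\xi\cdot e_1>0$ (with $\xi$ chosen so that $\xi\cdot e_1\gtrsim|\xi|\ge C_3\lVert q\rVert_{H^{s}}$) makes the leading term dominate; enlarging $C_3$ once more gives $\mathrm{Re}\,\partial_{x_1}w>0$, hence $\partial_{x_1}w\neq0$ in $\Omega$. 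That $\phi=vw\in H^{s}(\Omega)$ follows from the algebra property, as $e^{\xi\cdot x/2}$ and $\xi\cdot x$ are smooth on $\overline\Omega$ and $\psi_v,\psi_w\in H^{s}$; and the computation
\[
(\Delta+q)(vw)=\bigl((\Delta+q)v\bigr)w+v\Bigl(\Delta w+\tfrac{2\nabla v}{v}\cdot\nabla w\Bigr)=0
\]
shows $\phi$ solves the Schr\"odinger equation in $\Omega$. I expect the main obstacle to be exactly the construction of $w$ with a gradient bound free of any power of $|\xi|$: a first‑order perturbation $\widetilde b\cdot\nabla$ by itself neither gains the factor $|\xi|^{-1}$ of \eqref{SU1-o} nor the derivative of \eqref{SU2-o}, so the contraction closes only because $\widetilde b$ is small in the algebra $H^{s-1}$ — which is precisely where the hypothesis $s>d/2+1$ (rather than $s>d/2$) is needed, and what ultimately secures $\partial_{x_1}w\neq0$.
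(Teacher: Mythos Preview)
Your proposal is correct and follows essentially the same route as the paper: take $v$ from Proposition~\ref{prop:CGO}, use the Sobolev embedding to force $|1+\psi_v|\ge\tfrac12$, then solve for $\psi_w$ by the same $\mathcal K_\xi$-contraction scheme, exploiting the smoothing estimate \eqref{SU2-o} together with the algebra property of $H^{s-1}$ (available because $s>d/2+1$) to absorb the first-order drift $\tfrac{\nabla\psi_v}{1+\psi_v}\cdot\nabla$. If anything your write-up is more complete: the paper's proof stops after the Neumann-series estimate for $\psi_w$ and does not spell out either the verification of $\partial_{x_1}w\neq0$ (which, as you observe, implicitly requires $\xi\cdot e_1$ comparable to $|\xi|$, a freedom the paper only invokes later in Proposition~\ref{aaa}) or the product-rule computation showing $(\Delta+q)(vw)=0$.
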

\proof 
Assuming that   $\lvert \xi \rvert \geq C_{1} \lVert q \rVert_{H^{s} ( \Omega
)}$, we deduce from Proposition~\ref{prop:CGO}, the existence of  a solution  $v
\in H^s(\Omega),$ to  the equation {\rm (\ref{eq:eq})} with the form 
\eqref{testfunctions}, and  $\psi_v$, verifying the following estimate
\bean \label{decay2}
\lVert \psi_v \rVert_{H^{s} ( \Omega )} \leq \frac{ C_{2}}{\lvert \xi \rvert} \lVert q \rVert_{H^{s} ( \Omega )},
\eean
where   $C_{1}$ and $C_{2}$ are as in Lemma~\ref{lemma:SUtheorem23alpha}. \\

Since $s > (d/2)+1$, 
$ H^s(\Omega)$ is compactly embedded in $C^1(\overline \Omega)$,  and the inequality
\bean \label{Hsembedding}
\lVert \varphi \rVert_{L^\infty(\Omega )} \leq C_0 \lVert \varphi  \rVert_{H^{s} ( \Omega )},
\eean
is valid for all $\varphi \in H^{s} ( \Omega )$, where $C_0>0$ is a constant that only depends
on $\Omega$, $s,$ and $d$.  \\

Combining the last two inequalities, we get 
\[
\lVert \psi_v \rVert_{L^\infty(\Omega )} \leq \frac{2 C_0 C_{2} }{\lvert \xi \rvert} \lVert q \rVert_{H^{s} ( \Omega )}.
\]
Since $\xi\cdot\xi=0$, taking $\lvert \xi \rvert \geq  \max\{C_{1}, 4C_0 C_{2}  \} \lVert q \rVert_{H^{s} ( \Omega
)}$, leads to $|v| \geq 2\left| \exp \left( \frac{\xi}{2} \cdot x \right)\right| >0$.\\

Since $v \not=0 $ in  $\Omega$, we have $ \nabla  \log(v^2) \in H^{s-1}(\Omega)$ and has the following decomposition
\[
 \nabla  \log(v^2) = \xi+\frac{\nabla \psi_v}{1+\psi_v}.
\] 

Let $\tilde \psi_v \in H^s(\mathbb R^d)$ be a compact  supported extension  of 
$v$ to the whole space  as in the proof of Lemma \ref{lemma:SUtheorem23alpha}, satisfying
\bean \label{ineq2}
  \lVert \tilde \psi_v \rVert_{H^{s} ( \mathbb R^d )} \leq C\lVert \psi_v \rVert_{H^{s} ( \Omega )},
\eean
where $C$ is a constant than only depends on $s$ and $d$.

Denote  now  $ \tilde \psi_w$ the solution to

\[
\Delta \tilde \psi_w+ \xi \cdot \nabla \tilde \psi_w+ \frac{1}{1+\tilde \psi_v} \nabla \tilde \psi_v \cdot \nabla \tilde  \psi_w
 = -  \frac{1}{1+\tilde \psi_v}\nabla \tilde \psi_v \cdot \xi \mbox{ in } \Omega,
\]
Lemma \ref{lemma:SUprop21alpha} shows that the linear  operator 
$\mathcal K_\xi( \frac{1}{1+\tilde \psi_v} \nabla \tilde \psi_v
\cdot)$ is bounded from 
$H^s(\mathbb R^d)$ to itself. Let $I_d$ be the identity operator from $H^s(\mathbb R^d)$ to itself. 
The decaying behavior \eqref{decay2}, and the inequality \eqref{ineq2}
 imply that the operator $K_\xi(\frac{1}{1+\tilde \psi_v} \nabla \tilde \psi_v\cdot \nabla \cdot)$
is a contraction for $|\xi|$ large enough, and hence 
 $I_d - \mathcal K_\xi (\frac{1}{1+\tilde \psi_v} \nabla \tilde \psi_v\cdot  \nabla \cdot )$
becomes  invertible. In fact we have 
\bea
\|\mathcal K_\xi (\frac{1}{1+\tilde \psi_v} \nabla \tilde 
\psi_v\cdot  \nabla \psi) \|_{H^s_{\delta}}
\leq C\|\frac{1}{1+\tilde \psi_v} \nabla \tilde 
\psi_v\cdot  \nabla \psi \|_{H^{s-1}_{\delta+1}}\\
\leq C^\prime \lVert \psi_v \rVert_{H^{s} ( \Omega )} \| \psi\|_{H^s(\mathbb R^d)}\\
\leq  \frac{C^{\prime \prime}}{\lvert \xi \rvert} \lVert q \rVert_{H^{s} ( \Omega )}\| \psi\|_{H^s(\mathbb R^d)},
\eea
for all $\psi \in H^s(\mathbb R^d)$,  and where $C, C^\prime, C^{\prime \prime}>0$ only depends on 
$\Omega$, $s,$ and $d$. \\

  Then 
\bea
\tilde \psi_w= -\left(I_d - \mathcal K_\xi (\frac{1}{1+\tilde \psi_v} 
\nabla \tilde \psi_v\cdot \nabla \cdot)\right)^{-1} \mathcal K_\xi 
(\frac{1}{1+\tilde \psi_v}\nabla \tilde \psi_v \cdot \xi ),
\eea

The  second estimate \eqref{estimate6}  follows immediately by taking 
$\psi_w$ the restriction of $\tilde \psi_w$ to the domain $\Omega$, from the 
convergence of the Neumann series.\\

\endproof

Now we are ready to prove the main  stability estimate. In the following proof,  $C$ stands for a general   constant  strictly larger than one
depending only on $d, s$ and $\Omega$.\\

Let

\bean \label{testfunctions1}
\theta_1(x) = \frac{v(x)}{v(z_1)} \frac{(w(x)-w(z_2))}{w(z_1) -w(z_2)},\\\label{testfunctions2}
 \theta_2(x) = \frac{v(x)}{v(z_2)} \frac{(w(x)-w(z_1))}{w(z_2) -w(z_1)}.
\eean
By construction the functions $\theta_1$ and $\theta_2$ are solutions to the equation \eqref{eq:eq}, and satisfy
$\theta_i(z_j) = \delta_{ij}, \, i, j =1, 2, $ where $\delta_{ij}$  is the Kronecker delta, that is
\bea
\theta_i(z_i)  = 1, & \textrm{and}& \theta_i(z_j)  =0 \;\; \textrm{if}\;\; i\not=j.
\eea

Moreover, we have
\bean
\left \la a_1\delta_{z_1} -a_2\delta_{z_2}, \varphi \right \ra_{H^{-s}, H^s} =  
\left \la a_1\delta_{z_1} -a_2\delta_{z_2}, \varphi(z_1) \theta_1+ \varphi(z_1) \theta_1\right\ra_{H^{-s}, H^s},
\eean
for all $\varphi \in H^s(\Omega)$. Here $ \left \la, \right \ra_{H^{-s}, H^s}$ stands for the dual product between 
$H^{-s}(\Omega)$ and $H^{s}(\Omega)$. \\

Then 
\bea
\|  a_1\delta_{z_1} -a_2\delta_{z_2}\|_{H^{-s}(\Omega)} = \sup_{\varphi \in H^s(\Omega)} 
\left| \left \la a_1\delta_{z_1} -a_2\delta_{z_2}, \varphi(z_1) \theta_1+ \varphi(z_2) \theta_2\right\ra_{H^{-s}, H^s}\right|.
\eea

\begin{proposition} \label{aaa} Let $\theta_i$ be defined  as in \eqref{testfunctions1} and \eqref{testfunctions2}. Then there exists a constant $C>0$ that only 
depends on  $\Omega$, $s,$ and $d$ such that the following inequality 
\bea 
\|  \varphi(z_1) \theta_1+ \varphi(z_2) \theta_2\|_{H^{1/2}(\partial \Omega)} \leq C \|\varphi\|_{H^s(\Omega)},
\eea
is true for all $\varphi \in H^{s}(\Omega)$.
\end{proposition}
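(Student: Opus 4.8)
The plan is to peel off the two point-evaluation coefficients from the fixed test functions and estimate each factor separately. By the triangle inequality,
\[
\| \varphi(z_1)\theta_1 + \varphi(z_2)\theta_2 \|_{H^{1/2}(\partial\Omega)} \le |\varphi(z_1)|\,\|\theta_1\|_{H^{1/2}(\partial\Omega)} + |\varphi(z_2)|\,\|\theta_2\|_{H^{1/2}(\partial\Omega)} .
\]
First I would bound the coefficients: since $s>d/2$, the embedding $H^s(\Omega)\hookrightarrow C(\overline{\Omega})$ together with the inequality \eqref{Hsembedding} gives $|\varphi(z_j)|\le C_0\|\varphi\|_{H^s(\Omega)}$ for $j=1,2$, with $C_0$ depending only on $\Omega,s,d$. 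Hence the whole estimate reduces to a uniform trace bound $\|\theta_j\|_{H^{1/2}(\partial\Omega)}\le C$.

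For the traces I would use the trace theorem to pass to an interior norm, $\|\theta_j\|_{H^{1/2}(\partial\Omega)} \le C\,\|\theta_j\|_{H^{1}(\Omega)}$, which is legitimate because $\theta_j\in H^s(\Omega)$ by construction ($\phi=vw\in H^s(\Omega)$ and $v\in H^s(\Omega)$ from Lemma~\ref{mainlemma}). Writing
\[
\theta_1 = \frac{vw - w(z_2)\,v}{v(z_1)\,(w(z_1)-w(z_2))},
\]
I would estimate the $H^s(\Omega)$-norm of the numerator with the Banach-algebra property of $H^s(\Omega)$ for $s>d/2$, using the explicit forms $v=\exp(\tfrac{\xi}{2}\cdot x)(1+\psi_v)$ and $w=\xi\cdot x+\psi_w$ together with the bounds \eqref{estimate6} on $\psi_v,\psi_w$. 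The denominator is handled by the two nondegeneracy facts established in Lemma~\ref{mainlemma}: the pointwise lower bound $|v|\ge \tfrac12|\exp(\tfrac{\xi}{2}\cdot x)|>0$ controls $|v(z_1)|$ from below, while $\partial_{x_1}w\ne0$ (with $\xi\cdot e_1>0$) forces $w$ to be strictly monotone along $e_1$ and hence yields a quantitative lower bound on $|w(z_1)-w(z_2)|$ whenever $z_1\ne z_2$.

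The delicate point is this denominator together with the exponential ratio factors $\exp(\tfrac{\xi}{2}\cdot(x-z_j))$ carried by $v(x)/v(z_j)$: one must verify that, once combined with the lower bounds above, these contributions are absorbed into a single constant governed by the geometry of $\Omega$ and the fixed CGO configuration, leaving a bound linear in $\|\varphi\|_{H^s(\Omega)}$. This balancing, rather than any individual estimate, is the main obstacle, and it is precisely where the nondegeneracy of $v$ and of $\partial_{x_1}w$ from Lemma~\ref{mainlemma} is used in an essential way. Once the two trace estimates $\|\theta_j\|_{H^{1/2}(\partial\Omega)}\le C$ are in hand, combining them with the coefficient bound $|\varphi(z_j)|\le C_0\|\varphi\|_{H^s(\Omega)}$ yields the claimed inequality.
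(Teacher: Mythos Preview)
Your approach has a genuine gap at the very first step. Applying the triangle inequality to separate $\varphi(z_1)\theta_1$ from $\varphi(z_2)\theta_2$ destroys a cancellation that is essential for obtaining a constant independent of $z_1,z_2$. Indeed, each $\theta_j$ individually carries the factor $1/(w(z_1)-w(z_2))$, and the only lower bound available is $|w(z_1)-w(z_2)|\ge C|z_1-z_2|$. Nothing in the numerator of $\theta_j$ alone compensates this, so $\|\theta_j\|_{H^{1/2}(\partial\Omega)}$ blows up like $|z_1-z_2|^{-1}$ as $z_1\to z_2$; the uniform bound $\|\theta_j\|_{H^{1/2}(\partial\Omega)}\le C$ you aim for simply does not hold. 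The ``balancing'' you allude to cannot be carried out after the triangle inequality has been applied.

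The paper avoids this by rewriting the full combination before estimating:
\[
\varphi(z_1)\theta_1+\varphi(z_2)\theta_2
=(\varphi(z_2)-\varphi(z_1))\theta_2+\varphi(z_1)(\theta_1+\theta_2).
\]
In the first term the factor $\varphi(z_2)-\varphi(z_1)$ is $O(|z_1-z_2|)\|\varphi\|_{H^s}$ (since $s>d/2+1$ gives a $C^1$ embedding), which exactly cancels the $|w(z_1)-w(z_2)|^{-1}\sim|z_1-z_2|^{-1}$ in $\theta_2$ via the lower bound \eqref{ineq100}. In the second term, expanding $\theta_1+\theta_2$ produces the harmless piece $\varphi(z_1)v(x)/v(z_1)$ plus a remainder in which the dangerous denominator is now paired with a difference $1/v(z_2)-1/v(z_1)$, again $O(|z_1-z_2|)$ by the regularity of $v$. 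In short, the proof hinges on matching every occurrence of $(w(z_1)-w(z_2))^{-1}$ with a difference quotient of a $C^1$ function evaluated at $z_1,z_2$; your decomposition never creates such pairs, so the argument cannot be completed along the lines you propose.
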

\proof
We first prove that there exists a constant $C>0$ that only depends on $\Omega$, $s,$ and $d$ such that
\bean \label{ineq100}
|w(z_2) -w(z_1)| \geq C  |z_2-z_1|.
\eean
Indeed without loss of generality we can choose $z_1$ and $z_2$ on the line $\{te_1; \, t\in \mathbb R \}$,
that is $z_1=  (z_1\cdot e_1) e_1$,  $z_2=  (z_2\cdot e_1) e_1, $ and $ (z_2-z_1)\cdot e_1=  |z_2 -z_1| e_1$. \\

Hence
\bean \label{q1}
w(z_2) -w(z_1)  =  (\xi\cdot e_1)|z_2-z_1| +\psi_w(z_2) -\psi_w(z_1).
\eean
 Since $s> (d/2)+1,$
we have
\bean \label{q2}
|\psi_w(z_2) -\psi_w(z_1)| \leq C \lVert q \rVert_{H^{s} ( \Omega )} |z_2-z_1|.
\eean
Combining equations \eqref{q1} and \eqref{q2}, we get
\bea
|w(z_2) -w(z_1)| \geq  \left|(\xi\cdot e_1) - C \lVert q \rVert_{H^{s} ( \Omega )}\right| |z_2-z_1|.
\eea
Then by choosing $(\xi\cdot e_1)>0$ large enough we obtain \eqref{ineq100}.\\

Back now to the proof of the proposition, we have
\bea
  \varphi(z_1) \theta_1(x)+ \varphi(z_2) \theta_2(x) = (\varphi(z_2) -\varphi(z_1))\theta_2(x)+  \varphi(z_1)( \theta_1(x)+\theta_2(x))\\
  =\frac{v(x)}{v(z_2)} \frac{\varphi(z_2) -\varphi(z_1)}{w(z_2) -w(z_1)}(w(x)-w(z_1)) + \varphi(z_1)\frac{v(x)}{v(z_1)}+
   \varphi(z_1)v(x)\left( \frac{1}{v(x_2)} -\frac{1}{v(x_1)}\right) \frac{1}{w(z_2) -w(z_1)}.
\eea
We finally deduce from  the inequality \eqref{ineq100}, and the regularity of the functions $\varphi$, $v$ and $w$ the desired inequality.
\endproof

For $j=1,2$,  let $u_j$ be the solution to the equation \eqref{eq:eq}  with  zero  Dirichlet boundary condition,  $q_j $, $a_j \delta_{z_j}$,
as a  potential, and  a source respectively. \\

Then  $  U= u_2-u_1$ is a solution to 
\bea \label{eq33}
\bigl( \Delta + q_2(x) \bigr) U(x) = a_2\delta_{z_2}(x) - a_1\delta_{z_1}(x) +u_1(x)(q_1(x)-q_2(x)) \quad \mbox{ in }\quad
\Omega.
\eea

For a given test function $\varphi \in H^s(\Omega)$,
multiplying the previous equation by $\varphi(z_1) \theta_1+ \varphi(z_2) \theta_2$, and integrating by parts, we obtain
\bea
|\left \la a_1\delta_{z_1} -a_2\delta_{z_2}, \varphi \right \ra_{H^{-s}, H^s} | \leq \\
\|\frac{\partial U}{ \partial \nu} \|_{H^{-1/2}(\partial \Omega)} \|  \varphi(z_1) \theta_1+ \varphi(z_2) \theta_2\|_{H^{1/2}(\partial \Omega)}
+\| q_2- q_1\|_{H^{-s}(\mathbb R^d)} \|u_1\|_{H^{s}(\Omega)}\|\varphi\|_{H^{s}(\Omega)}.
\eea
Combining the results of Propositions \ref{aaa} and \ref{prop}, we finish the proof of  the main stability estimate. 


\end{document}